\documentclass[11pt]{amsart}
\pdfoutput=1

            
\usepackage[utf8]{inputenc}
\usepackage[T1]{fontenc}
\usepackage[a4paper,text={460pt,660pt},headsep=8mm, centering,marginparwidth=2cm]{geometry}
\usepackage[hidelinks,pdfusetitle]{hyperref}
\usepackage[activate={true,nocompatibility},final,tracking=true,kerning=true,factor=1100,stretch=10,shrink=10]{microtype}
\usepackage{braket}    
\usepackage{amsthm}    
\usepackage{amssymb}   
\usepackage{mathrsfs}  
\usepackage[shortlabels]{enumitem}
\usepackage{caption}
\usepackage[margin=5pt,justification=centering,labelformat=simple,labelfont=sc]{subcaption}
\usepackage[mode=buildnew]{standalone} 
\usepackage{tikz-cd}
\usepackage{pgfplots}
\pgfplotsset{compat=newest}

\usepackage{bbold}


\theoremstyle{plain}
\newtheorem*{theo*}{Theorem}
\newtheorem{theorem}{Theorem}[section]

\newtheorem{Vtheo}{Theorem}

\newtheorem{Vprop}[Vtheo]{Proposition}
\newtheorem{Vlem}[Vtheo]{Lemma}

\newtheorem{Vdefi}[Vtheo]{Definition}

\theoremstyle{remark}

\newtheorem*{rem*}{Remark}

\newtheorem{Vrem}[Vtheo]{Remark}
\newtheorem{Vex}[Vtheo]{Example}


\DeclareMathOperator\sgn{sign}               
\DeclareMathOperator\spec{Spec}               


\newcommand{\ce}{\ensuremath{\mathscr{E}}}

\newcommand{\ind}[1]{\mathbb{1}_{#1}}
\newcommand{\un}{\mathbb{1}}

\newcommand{\ca}{\ensuremath{\mathscr{A}}}


\newcommand{\R}{\ensuremath{\mathbb{R}}}

\newcommand{\C}{\ensuremath{\mathbb{C}}}
\newcommand{\N}{\ensuremath{\mathbb{N}}}

\newcommand{\Z}{\ensuremath{\mathbb{Z}}}

\newcommand{\Diag}{\ensuremath{\mathrm{Diag}}}




\date{\today}

\author{Jean-François Delmas}
\address{Jean-François Delmas,
  CERMICS, \'{E}cole des Ponts, France}
\email{jean-francois.delmas@enpc.fr}

\author{Dylan Dronnier}
\address{Dylan Dronnier,
  Université de Neuchâtel, Switzerland}
\email{dylan.dronnier@unine.ch}

\author{Pierre-André Zitt}
\address{Pierre-André Zitt, LAMA, Université Gustave Eiffel, France}
\email{pierre-andre.zitt@univ-eiffel.fr}

\newcommand{\diag}{\ensuremath{\mathrm{Diag}}}
\DeclareMathOperator\sspec{spec}
\DeclareMathOperator\card{Card}

\title{Transformations preserving the effective spectral radius of a matrix}

\usepackage[draft]{fixme}
\FXRegisterAuthor{pa}{apa}{PAZ}
\FXRegisterAuthor{jf}{jf}{JFD}
\fxusetheme{colorsig}

\begin{document}

\thanks{This work is partially supported by Labex B\'ezout  reference ANR-10-LABX-58}

\subjclass[2020]{15A18, 15A86}
\keywords{Spectral radius, spectrum}

\begin{abstract} 
 We discuss transformations on matrices that preserve the effective spectrum and/or the effective spectral radius.
\end{abstract}

\maketitle

\section{Effective spectral radius of a matrix}

Let $n\in\N^*=\{n>0\, \colon \, n\in \Z\}$.
 For   $K$  a square matrix of size  $n$, let $\sspec(K)$ and
 $\rho(K)=\max\{|\lambda|\, \colon \lambda \in \sspec(K)\}$ denote
 the spectrum and spectral radius of $K$. By the Perron-Frobenius theorem, if  $K$ has nonnegative entries, then
 $\rho(K)$    is    an    eigenvalue   and    thus    belongs    to
 $\spec(K)$.  For  $\eta\in\R_+^n$,
  let $\Diag (\eta)$ denote the diagonal matrix with entries given by
  $\eta$.
We now define the  effective spectrum and
effective spectral radius functions associated to $K$.


\begin{Vdefi}
   \label{defi:effective}
   Let $K$ be  a square matrix of size  $n\in \N^*$.  The effective spectrum
   $\spec[K]$ 
   and effective spectral radius $R_e[K]$ functions  are defined
   on $\R_+^n$ by:
   \[
     \spec[K](\eta)=\sspec(K\cdot \Diag (\eta))
     \quad\text{and}\quad
     R_e[K](\eta)=\rho(K\cdot \Diag (\eta))
     \quad\text{for}\quad
          \eta\in\R_+^n.
   \]
 \end{Vdefi}
Trivially, two  matrices with  the  same
 effective spectrum have the same effective spectral radius.

 \medskip
    
 Motivated by the quantitative effect of vaccination strategies on the
 reproduction number in epidemic models, see
 \cite[Theorem~7.4]{EpidemicsInHeCairns1989} or 
 \cite{delmas_infinite-dimensional_2020}  in a more general
 framework, we give in \cite{ddz-Re}
 examples of transformations on positive compact operators that leave
 the effective spectral radius and effective spectrum functions
 invariant (mainly transposition and diagonal similarity, see below).
 The aim of this note is to explore further this invariance property in a
 finite dimensional setting, that is, for matrices.

\medskip
 
 Our main result is a characterization of the equality between
effective spectrum, shown in Section~\ref{sec:equality}.
 Building in
 particular on results from \cite{hartfiel,Loe86}, we then give
 in Section~\ref{sec:preservers} sufficient
 conditions for two matrices to have the same effective spectral
 radius, and show that they are necessary under various additional
 assumptions.

\section{Equivalent conditions for equality}
\label{sec:equality}

Let us first define some notation.
  For $\alpha$ and $\beta$ non-empty  subsets of $\{1,...,n\}$ we denote
  by  $K[\alpha,\beta]$ the  sub-matrix of  $K$ obtained  by keeping  the
  lines   in   $\alpha$   and   the  columns   in   $\beta$,   and   let
  $K[\alpha]  = K[\alpha,\alpha]$.
     The determinant  of $K[\alpha]$  is
  called a \emph{principal minor} of $K$,  its \emph{size} is the cardinal of
  $\alpha$. It is elementary to check that the characteristic polynomial
  of $K$ may be written as:
\begin{equation}
  \label{eq:charac_poly}
  \chi_K(t) = \sum_{k=0}^n (-1)^k  c_{n-k} t^k,
  \end{equation}
where   $c_0=1$ and, for
$j\geq 1$,  $c_j$ is the sum of all principal minors of size $j$ of $K$.

\medskip

We now give  equivalent conditions
for  the  effective  spectrum  and effective  spectral  radius  for  two
matrices to be equal.

\begin{theorem}[Effective spectrum and principal minors]
  \label{prop:minors}
  Let  $K$ and  $\tilde{K}$ be  square matrices  of the  same size $n\in
  \N^*$ with
  nonnegative entries.  The following are equivalent. 
    \begin{enumerate}[(i)]
    \item\label{lem:minors:R} The functions $R_e[K]$ and $
      R_e[\tilde{K}]$ coincide on $\R_+^n$. 
    \item\label{lem:minors:RE}  The functions $R_e[K]$ and $
      R_e[\tilde{K}]$ coincide on $\{0, 1\}^n$.
    \item\label{lem:minors:S} The functions $\spec[K]$ and $\spec[\tilde{K}]$ coincide on $\R_+^n$. 
    \item\label{lem:minors:SE}    The     functions    $\spec[K]$    and
      $\spec[\tilde{K}]$ coincide on  $\{0, 1\}^n$.
    \item\label{lem:minors:min} All principal minors of $K$ and $\tilde{K}$ coincide.
    \end{enumerate}
  \end{theorem}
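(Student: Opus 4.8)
The plan is to prove the cyclic chain of implications $(v)\Rightarrow(iii)\Rightarrow(i)\Rightarrow(ii)\Rightarrow(v)$, and then observe that $(iv)$ joins the cycle for free: indeed $(iii)\Rightarrow(iv)$ by restricting $\eta$ from $\R_+^n$ to $\{0,1\}^n$, and $(iv)\Rightarrow(ii)$ because the spectral radius is the largest modulus of an element of the spectrum, so that equality of the spectra on $\{0,1\}^n$ forces equality of the spectral radii there. The implications $(iii)\Rightarrow(i)$ and $(i)\Rightarrow(ii)$ follow from these very same two observations. Hence the only two steps that require genuine work are $(v)\Rightarrow(iii)$ and $(ii)\Rightarrow(v)$.

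For $(v)\Rightarrow(iii)$ I would start from the elementary identity, valid for any non-empty $\alpha\subseteq\{1,\dots,n\}$, that $(K\cdot\Diag(\eta))[\alpha]$ is obtained from $K[\alpha]$ by scaling, for each $j\in\alpha$, the $j$-th column by $\eta_j$; consequently its determinant equals $\det(K[\alpha])\prod_{i\in\alpha}\eta_i$. Substituting this into \eqref{eq:charac_poly} shows that the characteristic polynomial of $K\cdot\Diag(\eta)$ has coefficients $c_j(\eta)=\sum_{|\alpha|=j}\det(K[\alpha])\prod_{i\in\alpha}\eta_i$, which are functions of the principal minors of $K$ alone. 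Therefore, if $K$ and $\tilde K$ have the same principal minors, then $K\cdot\Diag(\eta)$ and $\tilde K\cdot\Diag(\eta)$ have the same characteristic polynomial for every $\eta\in\R_+^n$, hence the same spectrum; this is $(iii)$.

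The substantive part is $(ii)\Rightarrow(v)$. Specialising the previous computation to $\eta=\ind{\alpha}$, only the subsets $\beta\subseteq\alpha$ contribute, and one obtains $\chi_{K\cdot\Diag(\ind{\alpha})}(t)=(-t)^{n-|\alpha|}\,\chi_{K[\alpha]}(t)$; in particular $R_e[K](\ind{\alpha})=\rho(K[\alpha])$, the spectral radius of the principal submatrix $K[\alpha]$, which is itself a nonnegative matrix. I would then prove $\det(K[\alpha])=\det(\tilde K[\alpha])$ for all $\alpha$ by induction on $|\alpha|$. For $|\alpha|=1$ this reads $K_{ii}=\rho(K[\{i\}])=\rho(\tilde K[\{i\}])=\tilde K_{ii}$. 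For $|\alpha|\geq 2$, expand $\chi_{K[\alpha]}$ and $\chi_{\tilde K[\alpha]}$ via \eqref{eq:charac_poly}: every coefficient apart from the constant term is a sum of principal minors of size $<|\alpha|$, which coincide by the induction hypothesis, so $\chi_{K[\alpha]}-\chi_{\tilde K[\alpha]}$ is the constant $\det(K[\alpha])-\det(\tilde K[\alpha])$. By the Perron--Frobenius theorem, $r:=\rho(K[\alpha])=\rho(\tilde K[\alpha])$ (the equality being hypothesis $(ii)$) is an eigenvalue of both $K[\alpha]$ and $\tilde K[\alpha]$, so $\chi_{K[\alpha]}(r)=0=\chi_{\tilde K[\alpha]}(r)$; evaluating the constant difference at $t=r$ forces $\det(K[\alpha])=\det(\tilde K[\alpha])$. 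This closes the induction, and with it the cycle.

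The main obstacle I anticipate is exactly this last step: hypothesis $(ii)$ supplies only a single scalar — a spectral radius — for each principal submatrix, and the trick is to realise, through the induction on $|\alpha|$, that the characteristic polynomials of $K[\alpha]$ and $\tilde K[\alpha]$ can differ only in their constant term, so that the common root handed over by Perron--Frobenius is precisely the amount of information needed to recover the remaining principal minor. Everything else amounts to routine bookkeeping with \eqref{eq:charac_poly}.
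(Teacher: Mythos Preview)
Your proposal is correct and follows essentially the same route as the paper: the implication $(v)\Rightarrow(iii)$ is obtained via multilinearity of the determinant and formula~\eqref{eq:charac_poly}, and the key step $(ii)\Rightarrow(v)$ uses the same idea that, by induction, $\chi_{K[\alpha]}$ and $\chi_{\tilde K[\alpha]}$ differ only in their constant term, whereupon the common Perron--Frobenius root forces $\det(K[\alpha])=\det(\tilde K[\alpha])$. The only cosmetic difference is that the paper inducts on the ambient dimension $n$ (applying the induction hypothesis to the submatrices $K[\alpha]$, $\tilde K[\alpha]$), whereas you induct directly on $|\alpha|$ within the fixed matrices; your organization is marginally cleaner but the mathematical content is identical.
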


For simplicity, we write $\ce(n)=\{0, 1\}^n$. 
  
    \begin{proof}
      Clearly \ref{lem:minors:S}$\implies$\ref{lem:minors:R}$\implies$\ref{lem:minors:RE}, and \ref{lem:minors:S}$\implies$\ref{lem:minors:SE}$\implies$\ref{lem:minors:RE}.

      Let us check  that \ref{lem:minors:min} implies \ref{lem:minors:S}. Assume  that all principal
      minors of  $K$ and $\tilde{K}$  coincide.  
            For any vector $\eta\in \R^n$  and any set
      of indices $\alpha$, by multi-linearity of the determinant, we get:
      \[
        \det\Big((K\cdot \diag(\eta))[\alpha]\Big) = \Big(\prod_{i\in\alpha}
          \eta_i \Big) \det\left( K[\alpha]\right).  
      \] 
      Consequently,    all   principal    minors   of    $K\cdot \diag(\eta)$   and
      $\tilde{K}\cdot \diag(\eta)$   coincide.    By~\eqref{eq:charac_poly}   this
      implies that  $K\cdot \diag(\eta)$ and $\tilde{K}\cdot \diag(\eta)$ have  the same
      spectrum. Thus, Point
      \ref{lem:minors:S} holds.  \medskip
  
      Therefore, it is enough  to prove that \ref{lem:minors:RE} implies
      \ref{lem:minors:min}.  The proof is an induction on the dimension.
      The result  is clear in dimension  $1$.  Assume that it  holds for
      any square  matrix with  nonnegative entries of  dimension smaller
      than or equal to $n$.  Let  $K$ and $\tilde{K}$ be two square matrices of
      dimension  $n+1$ with nonnegative entries, and  assume that  $R_e[K]$ and  $R_e[\tilde{K}]$
      coincide    on   $\ce(n+1)$.     For   any    non-empty
      $\alpha\subset  \{1,...,n+1\}$, let  $\eta_\alpha$  be the  column
      vector   $(\un_\alpha(i),   1\leq   i\leq  n+1)$, that is, with 1
      in position $\alpha$ and 0 otherwise.    Notice that for
      any matrix $K'$:
\[
  R_e[K'](\eta_{\alpha}) = \rho(K' \cdot \diag(\eta_\alpha))=\rho(K'[\alpha]).
\]
Fix $\alpha\subset \{1,...,n+1\}$ nonempty, with $\alpha\neq
\{1,...,n+1\}$. Let $\beta\subset \alpha$ and set  $\tilde{\eta}_\beta =
(\un_\beta(i), i\in \alpha)$.  
We have:
 \begin{align}
   \label{eq:ReK}
   R_e[K'[\alpha]](  \tilde{\eta}_\beta)
   =  \rho(K'[\alpha]    \cdot \diag(\tilde{\eta}_\beta))
   = \rho(K'\cdot \diag(\eta_\alpha)\cdot \diag(\eta_\beta))
   &= \rho(K'\cdot \diag(\eta_\beta))
     \\   \nonumber
&=    R_e[K'](\eta_\beta).
      \end{align}
      Since   $\eta_\beta\in \ce(n+1)$, we  get
      $R_e[K](\eta_\beta)   =    R_e[\tilde{K}](\eta_\beta)$   for   all
      $\beta\subset  \alpha$.    We  deduce  from   \eqref{eq:ReK}  that
      $R_e[K[\alpha]]=R_e[\tilde K[\alpha]]$ on $\ce(\card \alpha)$.  By
      the induction  hypothesis the principal minors  of $K[\alpha]$ and
      $\tilde{K}[\alpha]$  are equal, that is all  principal minors of  size less
      than or equal to $n$ of $K$ and $\tilde{K}$ coincide. It remains to check that
      the  determinants are  the same.   Since all  principal minors of  size less
      than or equal to $n$ coincide, we deduce
      from~\eqref{eq:charac_poly} that:
  \begin{equation}
   \label{eq:caract-K=}
    \chi_K(t) - \det(K) = \chi_{\tilde{K}}(t) - \det(\tilde{K}).
  \end{equation}
  Since  $K$ and  $K'$  have nonnegative  entries, by  Perron-Frobenius
  theorem, their spectral  radius $\rho(K)=R_e[K](\ind{})$ and $\rho(K')=R_e[K'](\ind{})$
  is  also  an eigenvalue,  and  thus  a  root of  their  characteristic
  polynomial.     As    $R_e[K](\ind{})=R_e[K'](\ind{})$,   we    deduce
  from~\eqref{eq:caract-K=} that $\det(K) = \det(\tilde{K})$.  This ends
  the proof of the induction step.
\end{proof}

According to the proof of Theorem~\ref{prop:minors}, we have that~\ref{lem:minors:min} implies~\ref{lem:minors:S} and thus~\ref{lem:minors:R}, \ref{lem:minors:RE} and~\ref{lem:minors:SE}
without assuming that the entries are nonnegative.  We first
investigate whether \ref{lem:minors:R} from Theorem~\ref{prop:minors}
implies \ref{lem:minors:min} when the entries of the matrices have
generals signs.  Notice that $R_e[K]=R_e[\tilde{K}]$ automatically
implies $K$ and $\tilde K$ have the same entries on the diagonal up to
their sign (evaluate the effective spectral radii on $\eta$ with only
one non-zero component).  So in order for the equality of the
effective spectral radii of $K$ and $\tilde K$ to imply the equality
of all principal minors, it is necessary to assume that the two
matrices have the same sign on their diagonal, that is,
$\sgn(K_{ii})=\sgn(\tilde K_{ii})$ for all indices $i$. It is however
not enough, see next example and lemma.

  
\begin{Vex}[Same effective spectral radii do not imply same
  principal minors in general]
Consider the following two matrices:
 \[
   K=\begin{pmatrix}
      0&1\\1 & 0
    \end{pmatrix}
    \quad\text{and}\quad
      \tilde K=\begin{pmatrix}
      0&-1\\1&0
    \end{pmatrix}.
  \]
We have  $R_e[K]=R_e[\tilde K]$ on $\R_+^2$, but, even if
all  the principal  minors of 
size  1  coincide, the  principal minor of  size  two  is  different.
\end{Vex}
  
The key point is in fact the 
number of zeroes on the diagonal.

\begin{Vlem}
  \label{Vlem:Re-min}
 Let  $K$ and  $\tilde{K}$ be  square matrices  of the  same size $n\in
  \N^*$  with the  same sign on their diagonal
 and  having  at  most   one  zero  term in their diagonal. 
If  $R_e[K]=R_e[\tilde{K}]$ (on $\R_+^n$), then all principal minors of $K$ and $\tilde{K}$ coincide.
\end{Vlem}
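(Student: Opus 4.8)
The plan is to argue by induction on the common size $n$ of the matrices. The base case $n=1$ is immediate: evaluating the effective spectral radii at a vector with a single nonzero coordinate gives $|K_{11}|=|\tilde K_{11}|$, and the sign hypothesis then forces $K_{11}=\tilde K_{11}$, the only principal minor.

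For the inductive step, assume the statement holds for all sizes $\le n-1$ (with $n\ge 2$) and let $K,\tilde K$ be as in the statement, of size $n$. First I would pass to principal submatrices: exactly as in the derivation of~\eqref{eq:ReK}, the equality $R_e[K]=R_e[\tilde K]$ on $\R_+^n$ implies, for every nonempty $\alpha\subsetneq\{1,\dots,n\}$, that $R_e[K[\alpha]]=R_e[\tilde K[\alpha]]$ on $\R_+^{\card\alpha}$ — putting zeros in the coordinates outside $\alpha$ turns $K\diag(\eta)$ into a matrix whose spectrum is that of $K[\alpha]\diag(\eta_\alpha)$ together with extra zeros. The submatrices $K[\alpha],\tilde K[\alpha]$ inherit the sign hypothesis and still have at most one zero on their diagonal, so the induction hypothesis yields that all their principal minors coincide; letting $\alpha$ range over all index sets of size $\le n-1$ shows that all principal minors of $K$ and $\tilde K$ of size $\le n-1$ coincide. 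By~\eqref{eq:charac_poly} applied to $K\diag(\eta)$ and $\tilde K\diag(\eta)$ this means that, for every $\eta\in\R_+^n$, the two characteristic polynomials differ only by the constant $\det(K\diag(\eta))-\det(\tilde K\diag(\eta))=\bigl(\prod_i\eta_i\bigr)(\det K-\det\tilde K)$. Hence it only remains to prove $\det K=\det\tilde K$.

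This is where the diagonal hypothesis is used. Since $n\ge 2$ and at most one diagonal entry vanishes, after a simultaneous permutation of rows and columns we may assume $K_{11}\ne 0$. For $\mu>0$ set $M_\mu=K\diag(\mu,1,\dots,1)$ and $\tilde M_\mu=\tilde K\diag(\mu,1,\dots,1)$. As $\mu\to\infty$, $\mu^{-1}M_\mu$ converges to $K\diag(1,0,\dots,0)$, the rank-one matrix obtained from $K$ by deleting all columns but the first, which has $K_{11}$ as a simple nonzero eigenvalue and $0$ with multiplicity $n-1$. By continuity of the eigenvalues, for $\mu$ large $M_\mu$ then has exactly one eigenvalue $\lambda_1(\mu)=\mu K_{11}(1+o(1))$, all others being $o(\mu)$; in particular $\lambda_1(\mu)$ is the unique eigenvalue of $M_\mu$ of maximal modulus. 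As $M_\mu$ is real, its non-real eigenvalues come in conjugate pairs, so $\overline{\lambda_1(\mu)}=\lambda_1(\mu)$: thus $\lambda_1(\mu)\in\R$, and since $\mu>0$ its sign is $\sgn(K_{11})$. Writing $\varepsilon=\sgn(K_{11})$ we get $\lambda_1(\mu)=\varepsilon\,R_e[K](\mu,1,\dots,1)$, a root of the characteristic polynomial of $M_\mu$. The same reasoning applied to $\tilde K$ (using $\tilde K_{11}\ne 0$ and $\sgn(\tilde K_{11})=\varepsilon$) shows that $\varepsilon\,R_e[\tilde K](\mu,1,\dots,1)$ is a root of the characteristic polynomial of $\tilde M_\mu$. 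Since $R_e[K]=R_e[\tilde K]$, these two roots coincide; evaluating the constant-difference identity above at this common value (with $\eta=(\mu,1,\dots,1)$, so $\prod_i\eta_i=\mu$) gives $0=\mu(\det K-\det\tilde K)$, hence $\det K=\det\tilde K$, which closes the induction.

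The genuinely delicate point is the spectral claim of the third paragraph — that for $\mu$ large the spectral radius of $M_\mu$ is attained at a single \emph{real} eigenvalue whose sign is that of $K_{11}$; I expect this to follow cleanly from the convergence of $\mu^{-1}M_\mu$ to a rank-one matrix with a simple nonzero eigenvalue, together with continuity of eigenvalues and the reality of $M_\mu$, while everything else is bookkeeping with principal minors and characteristic polynomials. One should also note that the reduction step really needs ``at most one zero on the diagonal'' and not just ``at least one nonzero diagonal entry'': the former is precisely the property inherited by the principal submatrices $K[\alpha]$ used in the induction.
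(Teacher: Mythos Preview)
Your proof is correct and follows essentially the same route as the paper: induction on $n$, reduction to submatrices via~\eqref{eq:ReK} to match all smaller principal minors, then a perturbative argument at a nonzero diagonal entry to force $\det K=\det\tilde K$. The only cosmetic differences are that the paper uses $\eta=(1,\varepsilon,\dots,\varepsilon)$ with $\varepsilon\to 0$ (and first multiplies by $-1$ to make $K_{11}>0$) whereas you use $\eta=(\mu,1,\dots,1)$ with $\mu\to\infty$ and track the sign explicitly; your conjugate-pairs justification for the reality of the dominant eigenvalue makes explicit a step the paper leaves to the reader.
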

  
\begin{proof}
Mimicking the  proof by
    induction of \ref{lem:minors:RE} $\implies$\ref{lem:minors:min} from
    Theorem~\ref{prop:minors} and  assuming~\ref{lem:minors:R}, we  get by
    induction that~\eqref{eq:caract-K=}  holds for $K$ and  $\tilde K$ as well
    as   for  $K\cdot \diag(\eta)$  and   $\tilde K\cdot \diag(\eta)$   with
    $\eta\in   \R_+^{n+1}$    (this   amounts   to   multiply    all   terms
    in~\eqref{eq:caract-K=} by $\prod _{i=1}^{n+1} \eta_i$):
  \[
 \chi_{K\eta} (t) - \det(K\cdot \diag(\eta)) =
 \chi_{\tilde{K}\eta}(t) - \det(\tilde{K}\cdot \diag(\eta)).
 \]
 As  there  is  at most  one  zero  on  the  diagonal, without  loss  of
 generality  (multiplying  $K$  and  $\tilde  K$ by  $-1$  and  using  a
 permutation of the canonical bassis of $\R^{n+1}$ if necessary), we can
 assume      that     $K_{11}=\tilde      K_{11}=a>     0$.       Taking
 $\eta=(1, \varepsilon, \ldots,  \varepsilon)$ for $\varepsilon>0$ small
 enough,  we deduce  that the  spectral radius  of $K\cdot  \diag(\eta)$
 (resp.  $\tilde{K}\cdot  \diag(\eta)$) is  also a simple  eigenvalue of
 $K\cdot   \diag(\eta)$  (resp.    $\tilde{K}\cdot  \diag(\eta)$).    As
 $R_e[K](\eta)=R_e[\tilde{K}](\eta)$,        we       deduce        that
 $\det(K\cdot \diag(\eta))=  \det(\tilde{K}\cdot \diag(\eta))$  and thus
 $\det(K) = \det(\tilde{K})$. Thus, by  induction, all the minors of $K$
 and $\tilde{K}$ coincide.
\end{proof}

We     now     check that      \ref{lem:minors:RE}     from
Theorem~\ref{prop:minors}         does not     imply            \ref{lem:minors:R}
or~\ref{lem:minors:min} when  the entries of the  matrices have generals
signs (even with positive entries on the diagonal).

\begin{Vex}[Same effective spectral radii on Boolean vectors do not imply same
  effective spectral radius]

Consider  the
  following two matrices:
 \[
   K=\begin{pmatrix}
      1&\beta\\\beta& 1 
    \end{pmatrix}
    \quad\text{and}\quad
      \tilde K=\begin{pmatrix}
      1&-\gamma\\\gamma&1
    \end{pmatrix},
  \]
  where $\gamma>0$  and $\beta= \sqrt{1+\gamma^2} -1$.   The eigenvalues
  of  $K$ are  $\sqrt{1+ \gamma^2}$  and $2  - \sqrt{1+  \gamma^2}$; the
  eigenvalues of  $\tilde K$ are $1  \pm \gamma i$.  In  particular, the
  two matrices have  the same spectral radius  $\sqrt{1+ \gamma^2}$. The
  functions $R_e[K]$ and $ R_e[\tilde{K}]$ clearly coincide on $\ce(2)$.
  Since $\det(K) \neq \det(\tilde K)$,  we deduce that all the principal
  minors  of   $K$  and   $\tilde  K$  do   not  coincide,   and  thus
  $R_e[K]\neq R_e[\tilde{K}]$ on $\R_+^2$ thanks to Lemma~\ref{Vlem:Re-min}.
  \end{Vex}

  \section{Matrices with the same effective spectral radius}
  \label{sec:preservers}

  Let us first recall a few  notions. 
  The  matrix   $K$   is
\emph{irreducible}  if  $K[\alpha,  \alpha^c]\neq  0$  for  all  subsets
$\alpha$ such  that $\alpha$ and  $\alpha^c$ are non-empty. 
The non-empty subset $\alpha$  is irreducible for $K$
if $K[\alpha]$ is irreducible.
Let $\ca(K)$ be the family  of 
maximal irreducible sets  for the inclusion, and consider the matrix
$K^\ca$  given by:
\[
  K^\ca_{i j}=K_{i j}
  \quad\text{if $i,j\in \alpha$ for some  $\alpha\in \ca(K)$,}\quad\text{and
    $K^\ca_{ij}=0$ otherwise.}
\]
The elements of $\ca(K)$ corresponds to  the 
atoms of $K$ in \cite{ddz-Re}. 
The map $K \mapsto K^\ca$ is not linear.

  The matrix
$K$ is  \emph{completely reducible}  if $K[\alpha,  \alpha^c]=0$ implies
$K[\alpha^c, \alpha]=0$ whenever $\alpha$  and $\alpha^c$ are non-empty,
or equivalently if $K=K^\ca$. 
We have  the following  graph interpretation: consider the oriented graph $G=(V,E)$ with $V=\{1, \ldots, n\}$
and $ij\in E$, that  is $ij$ is an oriented edge of $G$,  if and only if
$K_{ij}\neq 0$.  Then  the matrix $K$  is irreducible  if for any  choice of
vertices  $i,j\in V$  there is  an oriented  path from  $i$ to  $j$; the
matrix $K$ is completely reducible if  for any vertices $i,j\in V$ there
is an oriented path from $i$ to $j$  if and only if there is an oriented
path from $j$ to $i$.

\medskip

Recall  the matrix  $K$ is  \emph{diagonally similar}  to a
matrix $\tilde  K$ if there exists  a non singular real  diagonal matrix
$D$   such   that  $K=D \cdot  \tilde   K \cdot   D^{-1}$. Notice that if $K$ and
$\tilde K$ have nonnegative entries one can assume without loss of
generality that $D$ is also nonnegative. 
We recall the following well known result (see 
 \cite[Lemma~3.1 and Corrolary~5.4]{ddz-Re} in the infinite dimensional setting). For
 $\eta\in \R_+^n$, we denote $\ind{\{\eta>0\}}$ the vector whose $i$-th
 component is $\ind{\{\eta_i>0\}}$.

\begin{Vlem}[Sufficient conditions for equality of effective spectrum]
  \label{lem:Re}
  Let $K$ and  $\tilde K$ be  square matrices of  the same
  size $n\in \N^*$ with nonnegative  entries. We have:
  \begin{enumerate}[(i)]
   \item  $\spec[K]=\spec[K^\top]=\spec[K^\ca]$. 
    \item If $K$ and $\tilde K$ are diagonally similar, then $\spec[K]=\spec[\tilde K]
      $.
    \item For $\eta\in \R_+^n$, we have:
 \begin{align*}
   \spec[K\cdot \diag(\eta)]
   =\spec[ \diag(\eta)\cdot K]
  & =\spec[\diag(\ind{\{\eta>0\}})\cdot K\cdot \diag(\eta)]
\\ &   =\spec[\diag(\eta)\cdot K\cdot \diag(\ind{\{\eta>0\}})].
      \end{align*}     
  \end{enumerate}
\end{Vlem}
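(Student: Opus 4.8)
The plan is to prove each of the three identities in turn, reducing everything to the classical fact that similar matrices have the same spectrum and to a direct computation with the characteristic polynomial.

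\medskip

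\textbf{Proof strategy for (i).} For the transposition identity, fix $\eta \in \R_+^n$. Since $(K\cdot\diag(\eta))^\top = \diag(\eta)\cdot K^\top$ and a matrix and its transpose have the same spectrum, we get $\sspec(K\cdot\diag(\eta)) = \sspec(\diag(\eta)\cdot K^\top)$; it then remains to observe that $\diag(\eta)\cdot K^\top$ and $K^\top\cdot\diag(\eta)$ have the same characteristic polynomial (this is the standard fact that $AB$ and $BA$ share a characteristic polynomial when both are square, which will also be used for part (iii)). Hence $\spec[K](\eta) = \spec[K^\top](\eta)$. For $\spec[K] = \spec[K^\ca]$, the key point is that $K$ and $K^\ca$ differ only by ``off-block'' entries linking distinct maximal irreducible sets, so after a simultaneous permutation of rows and columns both matrices are block triangular with the \emph{same} diagonal blocks (the blocks indexed by elements of $\ca(K)$), the off-diagonal blocks of $K^\ca$ being zero. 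Multiplying on the right by $\diag(\eta)$ preserves this block structure, so $K\cdot\diag(\eta)$ and $K^\ca\cdot\diag(\eta)$ are both block triangular with identical diagonal blocks, and therefore have the same characteristic polynomial and the same spectrum. I would spell out the block-triangularity claim carefully, as it rests on the fact that the maximal irreducible sets, ordered appropriately by the reachability preorder of the associated graph, make $K$ block upper triangular — this is the main thing to get right in (i).

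\medskip

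\textbf{Proof strategy for (ii).} If $K = D\cdot\tilde K\cdot D^{-1}$ with $D$ nonsingular diagonal, then for any $\eta\in\R_+^n$ we have $D^{-1}$ and $\diag(\eta)$ commute (both diagonal), so
\[
  K\cdot\diag(\eta) = D\cdot\tilde K\cdot D^{-1}\cdot\diag(\eta) = D\cdot\big(\tilde K\cdot\diag(\eta)\big)\cdot D^{-1},
\]
which exhibits $K\cdot\diag(\eta)$ and $\tilde K\cdot\diag(\eta)$ as similar matrices; hence they have the same spectrum, i.e.\ $\spec[K](\eta) = \spec[\tilde K](\eta)$. This part is essentially immediate.

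\medskip

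\textbf{Proof strategy for (iii).} All four identities follow from the single observation that for square matrices $A,B$ of the same size, $AB$ and $BA$ have the same characteristic polynomial, hence the same spectrum. For the first equality, take $A = K\cdot\diag(\eta)$ and write $\diag(\eta)\cdot K = \diag(\eta)\cdot(K\cdot\diag(\eta))\cdot\diag(\eta)^{-1}$ when $\eta>0$; for general $\eta\in\R_+^n$ a limiting/continuity argument on the characteristic polynomial coefficients, or more cleanly the $AB$ versus $BA$ trick with $A=\diag(\eta)$, $B=K$ gives $\sspec(\diag(\eta)\cdot K) = \sspec(K\cdot\diag(\eta))$ directly. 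For the last two equalities, note $\diag(\eta) = \diag(\ind{\{\eta>0\}})\cdot\diag(\eta) = \diag(\eta)\cdot\diag(\ind{\{\eta>0\}})$ since $\ind{\{\eta_i>0\}}\eta_i = \eta_i$, so e.g.
\[
  \diag(\ind{\{\eta>0\}})\cdot K\cdot\diag(\eta) = \diag(\ind{\{\eta>0\}})\cdot K\cdot\diag(\ind{\{\eta>0\}})\cdot\diag(\eta),
\]
and applying the $AB\leftrightarrow BA$ swap with $A = K\cdot\diag(\ind{\{\eta>0\}})\cdot\diag(\eta)$ and $B = \diag(\ind{\{\eta>0\}})$ (using idempotence $\diag(\ind{\{\eta>0\}})^2 = \diag(\ind{\{\eta>0\}})$) reduces it to $\sspec(K\cdot\diag(\ind{\{\eta>0\}})\cdot\diag(\eta)) = \sspec(K\cdot\diag(\eta))$; the fourth identity is symmetric. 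The only mild subtlety — and the step I would be most careful about — is handling the degenerate case where some $\eta_i = 0$, which is why routing everything through the basis-free fact that $AB$ and $BA$ have equal characteristic polynomials (valid without any invertibility hypothesis) is preferable to conjugation arguments.
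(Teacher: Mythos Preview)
The paper does not actually prove this lemma; it is stated as a well-known result with a reference to the companion paper~\cite{ddz-Re}. Your arguments are correct in spirit and supply the details nicely.

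Parts (i) and (ii) are fine as written. In (i), identifying the elements of $\ca(K)$ with the strongly connected components of the associated digraph and using a topological sort of the condensation to put $K$ in block-triangular form (the Frobenius normal form) is exactly the right argument; in (ii) commutativity of diagonal matrices does the job immediately.

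In (iii) there is a small misreading you should fix: $\spec[M]$ is not the spectrum of $M$ but the \emph{function} $\eta'\mapsto\sspec(M\cdot\diag(\eta'))$ on $\R_+^n$. The four expressions in (iii) are therefore equalities of functions, to be verified at every $\eta'\in\R_+^n$, whereas your write-up only establishes, for example, $\sspec(K\cdot\diag(\eta))=\sspec(\diag(\eta)\cdot K)$. Happily your method extends without change: carry an extra factor $\diag(\eta')$ on the right throughout (it commutes with all the other diagonal factors), and the same $AB\leftrightarrow BA$ swap together with the identity $\diag(\eta)=\diag(\ind{\{\eta>0\}})\cdot\diag(\eta)$ yield, for instance,
\[
\sspec\big(\diag(\eta)\cdot K\cdot\diag(\eta')\big)
=\sspec\big(K\cdot\diag(\eta')\cdot\diag(\eta)\big)
=\sspec\big(K\cdot\diag(\eta)\cdot\diag(\eta')\big),
\]
and similarly for the remaining equalities. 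So the gap is a slip of notation rather than of substance.
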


We now try to find necessary conditions for equality of effective spectra.
In other words, we would like to see if there are others transformations
of matrices that leave the effective spectrum invariant. 
   Following \cite{BouChe17}
   we introduce the notion of clan. 

\begin{Vdefi}[Clans and clan-free matrix]
  Let $K$ be a  square matrix of size $n$. A  subset $\alpha$ of $\{1,...,n\}$ is 
  a \emph{clan} if it satisfies $2\leq \mathrm{Card}(\alpha) \leq
  n-2$, and the submatrices $K[\alpha,\alpha^c]$ and $K[\alpha^c,\alpha]$ have rank at most $1$.
 The matrix $K$ is  \emph{clan-free} if there exists no clan. 
\end{Vdefi}
\begin{Vrem}
  \label{rem:clanfree}
  A square matrix of size $n\in \{1, 2, 3\}$ is automatically clan-free.
  \end{Vrem}

  The following proposition gathers known results on necessary conditions
  for equality of principal minors, and therefore of effective spectrum. 
\begin{Vprop}
  \label{prop:matrix_case}
  Let $K$ and $\tilde{K}$ be square matrices of the same size with
  nonnegative entries, and the same effective spectrum, that is, 
  $R_e[K] = R_e[\tilde{K}]$.
  \begin{enumerate}[(i)]
  \item\label{prop:mat-Ksym-one} If $K$ and $\tilde{K}$ are symmetric,
    then $\tilde{K}=K$. 
  \item\label{prop:mat-Ksym-two}
    If $K$ is symmetric, then $\tilde{K}^\ca$ is diagonally similar
    to $K$.
    \item\label{prop:mat-Kirr}  If $K$ is irreducible and clan-free, 
      then $\tilde{K}$ is diagonally similar to $K$ or to $K^\top$.
    \end{enumerate}
  \end{Vprop}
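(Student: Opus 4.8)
The plan is to reduce everything to principal minors via Theorem~\ref{prop:minors} and then quote the structure theorems of \cite{hartfiel,Loe86} and the clan-free reformulation of \cite{BouChe17}. Since $K$ and $\tilde K$ have nonnegative entries, Theorem~\ref{prop:minors} turns the hypothesis $R_e[K]=R_e[\tilde K]$ into the statement that all principal minors of $K$ and $\tilde K$ coincide, equivalently $\spec[K]=\spec[\tilde K]$. I first record two elementary normalizations used below. (a) Suppose $\tilde K=DKD^{-1}$ for a nonsingular real diagonal $D$; then $\tilde K_{ij}=(d_i/d_j)K_{ij}$, so $d_i$ and $d_j$ have the same sign whenever $K_{ij}\neq 0$, hence $D$ has constant sign on each strongly connected component of the support graph of $K$, and when $K$ is irreducible or completely reducible (no edges between components) replacing $D$ by its absolute value does not change $DKD^{-1}$, so one may take $D\geq 0$. (b) If moreover $K$ is symmetric and $D>0$, comparing $\tilde K_{ij}=d_iK_{ij}/d_j$ with $\tilde K_{ji}=d_jK_{ij}/d_i$ gives $d_i=d_j$ whenever $K_{ij}\neq 0$, so $D$ is constant on each component and $DKD^{-1}=K$.

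Part~\ref{prop:mat-Kirr} is then the Hartfiel--Loewy theorem in the form of \cite{BouChe17} (see also \cite{hartfiel,Loe86}): since $K$ is irreducible and clan-free and $K,\tilde K$ have the same principal minors, $\tilde K=DKD^{-1}$ or $\tilde K=DK^\top D^{-1}$ for a nonsingular real diagonal $D$; normalization~(a), applicable because $K$ is irreducible, lets us take $D\geq 0$, which is the claim.

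For Part~\ref{prop:mat-Ksym-two}: by part~(i) of Lemma~\ref{lem:Re} we may replace $\tilde K$ by $\tilde K^\ca$, which has the same effective spectrum, hence the same principal minors, as $K$; so it suffices to prove that a completely reducible matrix $\tilde K=\tilde K^\ca$ with the same principal minors as the symmetric (hence completely reducible) matrix $K$ is diagonally similar to $K$. Relabel so that $K=\Diag(K_1,\dots,K_r)$ with each $K_s$ irreducible and symmetric on an index set $\alpha_s$. First, I would show that $K$ and $\tilde K^\ca$ have the same strongly connected components, i.e. $\tilde K^\ca=\Diag(M_1,\dots,M_r)$ with $M_s$ irreducible on $\alpha_s$: equality of the diagonal entries (size-one minors) together with equality of the higher principal minors prevents the support graph of $\tilde K^\ca$ from having a cycle straddling two distinct $\alpha_s,\alpha_t$ or splitting some $\alpha_s$, since a minimal such cycle would leave an uncancelled monomial in a principal minor that the block-diagonal $K$ cannot produce. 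Second, on each block $M_s$ has the same principal minors as the symmetric irreducible matrix $K_s$, so by Part~\ref{prop:mat-Kirr} when $K_s$ is clan-free (the transpose alternative disappearing because $K_s^\top=K_s$) and by the finer classification of \cite{hartfiel,Loe86} otherwise, $M_s=D_sK_sD_s^{-1}$ for some real diagonal $D_s$. Third, taking $D_s\geq 0$ by normalization~(a) and setting $D=\Diag(D_1,\dots,D_r)$ gives $\tilde K^\ca=DKD^{-1}$.

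Part~\ref{prop:mat-Ksym-one} then follows: if $\tilde K$ is symmetric it is completely reducible, $\tilde K=\tilde K^\ca$, so Part~\ref{prop:mat-Ksym-two} gives $\tilde K=DKD^{-1}$ with $D>0$, and normalization~(b) forces $\tilde K=K$. The step I expect to be the main obstacle is the block-separation in Part~\ref{prop:mat-Ksym-two} — showing that equal principal minors force $\tilde K^\ca$ to inherit the block pattern of $K$ — since it requires controlling the sign cancellations in the determinant expansions; here the block-diagonal form of $K$, which simply offers no competing monomials, and, at the last step, the symmetry of each block are exactly what make the argument go through.
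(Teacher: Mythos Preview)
Your reduction to equal principal minors via Theorem~\ref{prop:minors} and the treatment of Part~\ref{prop:mat-Kirr} match the paper exactly. For Parts~\ref{prop:mat-Ksym-one} and~\ref{prop:mat-Ksym-two}, however, the paper does not build up from the irreducible case: it simply invokes \cite[Theorem~3.5]{EngSch80}, which handles the symmetric situation directly. Your route---first proving that $\tilde K^\ca$ inherits the block pattern of $K$, then applying an irreducible result on each block---is a genuine alternative, but it is both longer and, as written, incomplete.

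The block-separation step is the real issue, and you are right to flag it. The heuristic ``a minimal straddling cycle leaves an uncancelled monomial'' is not yet a proof: one has to argue inductively that the $2\times 2$ minors force $\tilde K_{ij}\tilde K_{ji}=0$ for $i,j$ in distinct $K$-blocks, and then that a shortest cycle in $\tilde K$ crossing blocks produces a nonzero cycle product in the principal minor on its support that the block-diagonal $K$ cannot match (here nonnegativity is essential to prevent cancellation). This can be carried out, but it is precisely the content of the cycle-product arguments in \cite{EngSch80,hartfiel}, so you end up re-deriving part of what you could cite. Moreover, after block separation you still need the irreducible \emph{symmetric} case for each $K_s$, including when $K_s$ has a clan; your appeal to ``the finer classification of \cite{hartfiel,Loe86}'' is vague, and the clean reference here is again \cite{EngSch80}. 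Finally, in your normalization~(b) the conclusion $d_i=d_j$ requires $\tilde K$ symmetric as well as $K$; you use this only in Part~\ref{prop:mat-Ksym-one}, where it holds, but it should be stated. In short: the strategy is sound, but the paper's one-line citation of \cite{EngSch80} for the symmetric cases is both shorter and complete.
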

  \begin{proof}
  Thanks to   Theorem~\ref{prop:minors},   the
  principal minors  of $K$  and $\tilde{K}$  coincide. The  results then
  follow directly  from \cite[Theorem 3.5]{EngSch80}, for  the symmetric
  case, \cite[Theorem 3]{hartfiel} for the irreducible case when
  $n\leq 3$ (by Remark~\ref{rem:clanfree}, there can be no clan
  in this case), and \cite[Theorem 1]{Loe86} for the clan-free case
  when $n\geq 4$.
\end{proof}

Finally, as a corollary of Theorem~\ref{prop:minors}, we show that
the clan-free assumption is needed and get
an additional sufficient condition for equality. 
  Assume that $\alpha=\{1,...,m\}$ is a clan for $K$ (and thus $2\leq
  m\leq  n-2$). Then, there exists vectors $v$, $w$ of size $m$,
and $b$, $c$ of size $n-m$ such that $K$ may be written in block form as:
\begin{equation}
   \label{eq:K1}
  K = \begin{pmatrix}
    A & v  b^\top \\
    c w^\top & B
  \end{pmatrix}.
\end{equation}
The choice of $v,w, b,c$ is not unique in general. We  say that:
\begin{equation}
   \label{eq:K2}
 \tilde{K}  = \begin{pmatrix}
    A^\top & w  b^\top\\
    cv^\top & B
  \end{pmatrix}
\end{equation}
is a \emph{partial transpose} of $K$ (note that the partial transpose is
not unique in general).
\begin{Vrem} Such transformations have been considered in the special case
  where $v=w$ in \cite[Lemma 5]{Loe86}; see also \cite{BouChe17} where a similar
  transformation called \emph{clan reversal} is introduced for skew symmetric matrices. 
\end{Vrem}
 
\begin{Vprop}
  \label{prop:mat-Kclan}
  If $K$ is not clan-free, then we have $R_e[K] = R_e[\tilde{K}]$ for
  any partial transpose $\tilde{K}$ of $K$.
  \end{Vprop}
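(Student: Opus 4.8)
The plan is to reduce the claim, via Theorem~\ref{prop:minors}, to showing that a partial transpose preserves all principal minors. So the real task is: given $K$ written in block form as in~\eqref{eq:K1} with $\alpha=\{1,\dots,m\}$ a clan, and $\tilde K$ as in~\eqref{eq:K2}, show that $\det K[\gamma]=\det\tilde K[\gamma]$ for every nonempty $\gamma\subset\{1,\dots,n\}$. Write $\gamma=\gamma_1\sqcup\gamma_2$ with $\gamma_1\subset\alpha$ and $\gamma_2\subset\alpha^c$. Then $K[\gamma]$ is itself a block matrix whose off-diagonal blocks are $v[\gamma_1]\,b[\gamma_2]^\top$ and $c[\gamma_2]\,w[\gamma_1]^\top$ (restrictions of rank-one blocks, hence still rank at most one), and $\tilde K[\gamma]$ is the corresponding partial transpose of $K[\gamma]$ for the clan $\gamma_1$ inside $\gamma$. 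Thus it suffices to prove the determinant identity $\det K=\det\tilde K$ for the full matrices; the minors then follow by applying the same identity to each submatrix.

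The key computational step is a Schur-complement / cofactor expansion exploiting the rank-one structure. First handle the generic case where $A$ is invertible: by the Schur complement formula,
\[
  \det K = \det(A)\,\det\!\bigl(B - c w^\top A^{-1} v\, b^\top\bigr),
\]
and since $w^\top A^{-1}v$ is a scalar, the correction term $c w^\top A^{-1} v\, b^\top = (w^\top A^{-1} v)\, c b^\top$ depends on $A$ only through that scalar. For $\tilde K$ one gets $\det(A^\top)\det(B - c v^\top (A^\top)^{-1} w\, b^\top)$, and $v^\top (A^\top)^{-1} w = (w^\top A^{-1} v)^\top = w^\top A^{-1} v$ is the same scalar, while $\det(A^\top)=\det(A)$. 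Hence $\det K=\det\tilde K$ when $A$ is invertible. The general case follows by a density/continuity argument: replace $A$ by $A+\varepsilon I$ (equivalently $K$ by $K+\varepsilon\,\diag(\ind{\alpha})$, whose first block becomes invertible for all but finitely many $\varepsilon$), apply the identity, and let $\varepsilon\to 0$, using that both sides are polynomials in $\varepsilon$. Alternatively, one can argue directly via multilinear expansion of $\det K$ along the rows of $\gamma_1$, noting that any term picking two or more entries from the block $v b^\top$ vanishes (rank one), and the terms picking at most one entry from each off-diagonal block can be matched bijectively between $K$ and $\tilde K$ using the symmetry $(v b^\top)_{ij}(c w^\top)_{kl}=v_i b_j c_k w_l=(w b^\top)_{kj}(c v^\top)_{il}$ together with $\det A[\cdot]=\det A^\top[\cdot]$.

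I expect the main obstacle to be the bookkeeping in the direct multilinear argument — tracking which pairs of indices contribute off-diagonal entries and checking that the sign and the surviving $A$- and $B$-minors match up after swapping $v\leftrightarrow w$. The Schur-complement route avoids most of this but requires the perturbation argument to cover singular $A$; that argument is routine since $\det$ is polynomial, so I would present the Schur-complement computation for invertible $A$ and then invoke continuity. One should also remark that it is enough to treat a single clan (the block form~\eqref{eq:K1} with $\alpha=\{1,\dots,m\}$) since any clan can be brought to this form by a simultaneous row/column permutation, which affects $\det K$ and $\det\tilde K$ by the same sign and commutes with restriction to principal submatrices.
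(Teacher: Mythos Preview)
Your proposal is correct and follows essentially the same approach as the paper: reduce to principal submatrices (noting that $K[\gamma]$ and $\tilde K[\gamma]$ are again related by partial transposition), apply a Schur-complement block-determinant formula, and use that the cross term is a scalar equal to its own transpose. The only cosmetic difference is which block is eliminated: the paper complements out the $B$-block in $\det(K-\lambda I)$, so invertibility of $B-\lambda I$ is automatic for $\lambda\notin\sspec(B)$ and no separate perturbation is needed, whereas you complement out the $A$-block in $\det K$ and then pass from invertible $A$ to general $A$ via $A\mapsto A+\varepsilon I$.
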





\begin{proof}
  To prove Point \ref{prop:mat-Kclan}, suppose that  $K$ has a
  clan $\alpha$, and  let $\tilde{K}$ be a partial transpose  of $K$,
  so that $K$ and $\tilde{K}$ may be given by \eqref{eq:K1} and
  \eqref{eq:K2}.
For any $\lambda \notin \spec(B)$, using a  classical formula 
 for
determinants of block matrices, we get:
\begin{align*}
  \det(K-\lambda I)
  &= \det(A-\lambda I - vb^\top (B-\lambda I)^{-1} c w^\top) \det(B-\lambda I), \\
  \det(\tilde{K}-\lambda I )
  &= \det(A^\top-\lambda I - w b^\top (B-\lambda I)^{-1} cv^\top) \det(B-\lambda I) \\
 &= \det(A-\lambda I - vc^\top ((B-\lambda I)^{-1})^\top b w^\top) \det(B - \lambda I).
\end{align*}
Since $b^\top (B-\lambda I) ^{-1} c$ is a one-dimensional matrix, it is equal to its
transpose, so that $\det(K-\lambda I) = \det(\tilde{K} - \lambda I)$
are
equal for all $\lambda\notin \spec(B)$, and thus for all $\lambda\in \C$
by continuity.
Consequently,  the matrices  $K$ and $\tilde{K}$ have the  same
spectrum.
For
  any $\beta$, it is easily  seen that $K[\beta]$ and $\tilde{K}[\beta]$
  are  partial  transposes  of  each   other,  so  that  $K[\beta]$  and
  $\tilde{K}[\beta]$  also  have  the same  spectrum, and in
  particular
  the same spectral radius. Therefore $R_e[K]$ and $R_e[\tilde{K}]$
  coincide as \ref{lem:minors:R} and \ref{lem:minors:RE} are equivalent 
  in Theorem~\ref{prop:minors}.
\end{proof}



\end{document}